\newcommand{\Aut}{{\rm Aut}}
\newcommand{\End}{\mathop{\rm End }}
\newcommand{\C}{\mathcal{C}}
\newcommand{\V}{\mathcal{V}}
\newcommand{\FF}{\mathbb{F}}
\newcommand{\ZZ}{\mathbb{Z}}
\newcommand{\fg}{\mathfrak{g}}
\newcommand\bl{(\, . \, | \, . \, )}
\newtheorem{theorem}{Theorem}
\newtheorem{lemma}{Lemma}
\newtheorem{corollary}{Corollary}
\newtheorem{conjecture}{Conjecture}
\newtheorem{remark}{Remark}
\newtheorem{example}{Example}
\renewcommand{\epsilon}{\varepsilon}
\newcommand{\half}{\frac{1}{2}}
\newcommand{\la}{\lambda}
\newcommand{\al}{\alpha}
\newcommand{\epi}{\End \pi}
\begin{document}
	
\title{On Complexity of Representations of Quivers}


\author{Victor G.~Kac$ ^1 $} 
\address{$ ^1 $Department of Mathematics, M.I.T, 
	Cambridge, MA 02139, USA. Email:  kac@math.mit.edu Supported in part by the Bert and Ann Kostant fund.}
\maketitle

	\thispagestyle{empty}

\section*{Abstract} 
It is shown that, given a representation of a quiver over a finite field, one can check in polynomial time whether it is absolutely indecomposable.

\section{Some results on absolutely indecomposable representations of quivers}
Let $ \Gamma $ be a finite graph without self-loops (but several edges connecting two vertices are allowed), and let  $ \mathcal{V} $ denote the set of its vertices.
The graph $ \Gamma $ with an orientation $\Omega$ of its edges is called a \emph{quiver}.  A \textit{representation} of the quiver $(\Gamma, \Omega)$ over a field $ \mathbb{F} $ is a collection of finite-dimensional vector spaces $\{ U_v  \}_{v \in \mathcal{V}} $ over $\mathbb{F}$ and linear maps
$ \{ U_{v} \rightarrow {U}_{w} \} $ for each oriented edge $ v \rightarrow w $.
Homomorphisms and isomorphisms of two representations are defined in the obvious way.  The \emph{direct sum} of two representations $ ( \{{U}_v\}, \{{U}_{v} \rightarrow {U}_{w} \}  ) $
and $ ( \{{U}'_v\}, \{{U}'_{v} \rightarrow {U'}_{w} \}  )$  is the representation
\[ (  \{{U}_v  \oplus {U'}_v        \}, \{{U}_{v}  \oplus  {U'}_{v}  \rightarrow
   {U}_{w}  \oplus  {U'}_{w}        \}     ), \]
   where maps are the direct sums of maps.
A representation $\pi$ is called \textit{indecomposable} if it is not isomorphic to a direct sum of two non-zero representations;
$\pi$ is called \emph{absolutely indecomposable} if it is indecomposable over the algebraic closure $\bar{\FF}$ of the field $ \FF $.

Let $ r = \# \V $ and let $ Q = \underset{v \in \V}{\bigoplus} \ \ZZ \alpha_v  $
be a free abelian group of rank $ r $ with a fixed basis 
$ \{\alpha_v\}_{v \in \mathcal{V}} $.  Let $ Q_+ = \underset{v}{\bigoplus} \ZZ_{\geq 0} \, \alpha_v \subset Q $. 
The \emph{dimension} of a representation $ \pi  = \{{U}_v \} _{v\in \mathcal{V}}    $
is the element 
\[ \dim \pi = \sum_{v \in \mathcal{V} } (\dim \, {U}_v    ) \alpha_v  \in Q_+. \]
The \emph{Cartan matrix}  of the graph $ \Gamma $  is the symmetric matrix $ A = ( a_{uv}    )_{u,v \in \mathcal{V}} $, where \ $ a_{vv} = 2     $  and $  -a_{uv} $ is the number of edges, connecting $ u $ and $ v $ if $ u \neq v $.  Define a
$ \half \ZZ $-valued symmetric bilinear form
on $ Q $, such that $(\alpha|\alpha)\in \ZZ$, by
\[( \alpha_u  | \alpha_v )  =  \half a_{uv},\ u,v\in \mathcal{V},\] 
and the following (involutive) automorphisms $ r_v,\, v \in \mathcal{V}$, of the free abelian group $ Q $ 
\[r_v (\alpha_u) = \alpha_u - a_{uv}\alpha_v,  \ u \in \mathcal{V}.\] 
The group $W \subset \Aut \, Q   $, generated by all $ r_v, v \in \mathcal{V}  $, is called the \emph{Weyl group} of the graph $\Gamma$. It is immediate to see that the bilinear form $ \bl $ is invariant with respect to all $ r_v, v \in \mathcal{V} $, hence with respect to the Weyl group $ W $.

It is well known that the group $ W $ is finite if and only if the Cartan matrix $A$ is posiive definite, which happens if and only if all connected components of $ \Gamma $ are Dynkin diagrams of simple finite-dimensional Lie algebra of type $A_r, D_r,
E_6, E_7, E_8$ (see e.g.   \cite{K90} ). Gabriel's theorem \cite{G72} states that for a quiver $ (\Gamma, \Omega) $ the number of indecomposable representations, up to isomorphism, is finite if and only if the group $ W $ is finite. Moreover, in this case the map $ \pi \mapsto \dim \, \pi $ establishes a bijective correspondence between isomorphism classes of indecomposable representations of $(\Gamma, \Omega)$ and the
set of positive roots $ \Delta_+ \subset Q_+ $ of the semisimple Lie algebra with Dynkin diagram $ \Gamma $, where 
\begin{equation}\label{eq3}
 \Delta_+ = \underset{v \in \mathcal{V}}{\bigcup} \left( (W \cdot \alpha_v      )  \cap Q_+     \right). 
\end{equation}  

For an arbitrary graph $ \Gamma $ denote by $ \Delta^{\text{re}}_+ $ the RHS of \eqref{eq3}; note that $ (\alpha | \alpha    ) = 1 $ for all $ \alpha \in \Delta^{\text{re}}_+ $.
Furthermore, let
\begin{equation}\label{eq4}
  \mathcal{C} = \{\alpha \in Q_+ \backslash \{0 \} \, | \,  (\alpha | \alpha_v) \leq 0, v\in \mathcal{V}\,, \hbox{and}\, \operatorname{supp} \alpha \, \hbox{is connected}\},
\end{equation}  
where  for $ \alpha = \sum_{v \in \mathcal{V}} n_v \alpha_v $, we let $ \operatorname{supp} \alpha = \{v |\,\, n_v \neq 0 \} $.
We let 
\[ \Delta^\text{im}_+ = W \cdot  \mathcal{C} , \quad \Delta_+ = \Delta^\text{re}_+ \cup \Delta^\text{im}_+ .\]
It is easy to see that $ \Delta^\text{im}_+ \subset Q_+ $ and that $ (\alpha | \alpha)  \in \ZZ_{\leq 0}$ for $ \alpha \in \Delta^\text{im}_+ $. The set $ \Delta_+ \subset Q_+ $ is the set of \emph{positive roots} of the Kac-Moody algebra $ \mathfrak{g} (A)  $, associated to the Cartan matrix $ A  $, and
$ \Delta^\text{im}_+ $ is empty if and only if
the matrix $A$ is positive definite \cite{K80}, \cite{K90}.

\begin{theorem}\label{th1}
Let $ \FF = \FF_q $ be a field of $ q $ elements.
\begin{enumerate}
\item [(a)]  The number of absolutely indecomposable representations over $ \FF_q $ of dimension $ \alpha \in Q_+ $ of a quiver $ (\Gamma, \Omega) $ is independent of the orientation
  $\Omega$  . It is zero if $ \alpha \notin \Delta_+ $, and it is given by a monic
  polynomial $ P_{\Gamma , \alpha}  (q) $ of degree $ 1 - (\alpha | \alpha) $ with integer
  coefficients. In particular, $P_{\Gamma, \alpha}(q)=1$ if $\alpha \in \Delta^\text{re}_+$. 
\item [(b)]  The constant term $ P_{\Gamma , \alpha}  (0) $ equals to the multiplicity of the root $ \alpha  $ in $ \fg(A) $.
\item [(c)]  All coefficients of $ P_{\Gamma, \alpha }(q) $ are non-negative.
\item [(d)]  Consequently, for any quiver $(\Gamma, \Omega)$ and any $\alpha\in \Delta_+$
  there exists an absolutely indecomposable representation over $\FF_q$ of dimension
  $\alpha$.
\end{enumerate}
\end{theorem}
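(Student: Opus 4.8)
The four parts sit at very different depths, so I would establish them in order, isolating the two halves of Kac's conjecture, parts (b) and (c), as the genuinely hard input. \emph{For part (a) (counting):} write $\alpha=\sum_{v}n_v\alpha_v$ and fix $\FF_q$-spaces $U_v$ with $\dim U_v=n_v$. The representation variety is $R_\alpha=\prod_{v\to w}\operatorname{Hom}(U_v,U_w)$, acted on by $G_\alpha=\prod_v\mathrm{GL}(U_v)$ with orbits the isomorphism classes of dimension $\alpha$. Since $|R_\alpha(\FF_q)|=q^{\sum_{v\to w}n_vn_w}$ depends only on the number of edges joining each pair of vertices — not on $\Omega$ — and $|G_\alpha(\FF_q)|$ likewise, the groupoid count $\sum_\alpha \bigl(|R_\alpha|/|G_\alpha|\bigr)\,x^\alpha$ is an orientation-independent rational function of $q$. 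First I would use the Krull--Schmidt theorem to write this count as a plethystic exponential of the generating series of \emph{indecomposables}, and take the plethystic logarithm to extract the number $I_\alpha(q)$ of indecomposable representations over $\FF_q$, obtaining a polynomial in $q$ with integer coefficients, again independent of $\Omega$. Then, by Galois descent, an indecomposable over $\FF_q$ that fails to be absolutely indecomposable splits over $\bar{\FF}_q$ into $d>1$ Frobenius-conjugate absolutely indecomposables; this yields a Möbius-inversion relation expressing $I_\alpha$ through the absolutely-indecomposable counts $A_\beta(q^e)$, which I would invert to realize $A_\alpha(q)=:P_{\Gamma,\alpha}(q)$ as an orientation-independent polynomial in $\ZZ[q]$. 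For the degree and monicity, note $\dim G_\alpha-\dim R_\alpha=\sum_vn_v^2-\sum_{\text{edges}}n_un_v=(\alpha|\alpha)$, so $|R_\alpha|/|G_\alpha|\sim q^{-(\alpha|\alpha)}$ and, allowing for the one parameter by which the indecomposable stratum exceeds a free quotient (scalars act trivially), the leading term of $P_{\Gamma,\alpha}$ is $q^{\,1-(\alpha|\alpha)}$. Vanishing of $A_\alpha$ for $\alpha\notin\Delta_+$ and $P_{\Gamma,\alpha}=1$ for $\alpha\in\Delta^{\mathrm{re}}_+$ follow from the Bernstein--Gelfand--Ponomarev reflection functors, which implement the action of the generators $r_v$ of $W$ on dimension vectors and reduce any real root to a simple $\alpha_v$, whose sole indecomposable is the one-dimensional representation.

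\emph{Parts (b) and (c) (the conjecture)} are the hard part and cannot be reached by counting alone. For (b) the plan is to give $P_{\Gamma,\alpha}(0)$ a Lie-theoretic meaning: realize the root multiplicity $\dim\fg(A)_\alpha$ through the Weyl--Kac character formula and match it with the lowest coefficient of $A_\alpha$ by computing the latter on Nakajima quiver varieties, whose relevant cohomology recovers exactly $\dim\fg(A)_\alpha$. For (c) the plan is to endow $P_{\Gamma,\alpha}(q)$ with a cohomological interpretation — as the weight ($E$-)polynomial of an appropriate moduli space of representations, equivalently a Donaldson--Thomas-type invariant of the quiver — so that each coefficient becomes the dimension of a (pure) cohomology group and is therefore $\geq 0$. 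The main obstacle of the whole theorem lies precisely here: the elementary Hall-algebra and Möbius bookkeeping of (a) must be replaced by genuine geometry (quiver varieties, mixed Hodge theory / intersection cohomology), and all the difficulty is concentrated in producing these positive geometric models for the coefficients.

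\emph{Part (d)} is then immediate. By (a) the polynomial $P_{\Gamma,\alpha}$ is monic, and by (b),(c) it has non-negative integer coefficients with constant term $\dim\fg(A)_\alpha\geq 1$ for $\alpha\in\Delta_+$; hence $P_{\Gamma,\alpha}(q)\geq 1$ for every prime power $q$, so an absolutely indecomposable representation of dimension $\alpha$ over $\FF_q$ exists.
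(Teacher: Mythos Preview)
The paper does not prove Theorem~\ref{th1}: immediately after the statement it simply records the attributions, crediting (a) to \cite{K80} and \cite{K83}, (b) to \cite{H10}, (c) to \cite{HLRV13} (with \cite{CBVB04} for indivisible $\alpha$), and treating (d) as a consequence. Your outline is a reasonable high-level summary of what those cited papers actually do --- Kac's Burnside/Krull--Schmidt/Galois-descent counting and BGP reflection functors for (a), Hausel's use of Nakajima quiver varieties for (b), and the Hausel--Letellier--Rodriguez-Villegas cohomological interpretation for (c) --- and you correctly isolate (b) and (c) as the places where genuine geometry replaces bookkeeping. So there is nothing to compare: the paper offers no argument of its own here, and your sketch tracks the literature it cites.

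One minor sharpening of your derivation of (d): you do not actually need (b). Once (a) tells you $P_{\Gamma,\alpha}$ is a monic integer polynomial of degree $1-(\alpha|\alpha)\geq 0$ for $\alpha\in\Delta_+$, and (c) tells you its coefficients are non-negative, its value at any prime power $q$ is already at least $q^{\,1-(\alpha|\alpha)}\geq 1$; the constant term plays no role.
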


Claim (a) was proved in \cite{K80} and \cite{K83}; claims (b) and (c) were conjectured in
\cite{K80}, \cite{K83},
and proved in \cite{H10} and \cite{HLRV13} respectively. For indivisible
$\alpha\in \Delta_+$ both claims (b) and (c) were proved earlier in \cite{CBVB04}.

\section{Quasi-nilpotent subalgebras of $\End_\FF U$.}
Consider a finite-dimensional vector space $ U $ over a field $ \FF $. An endomorphism $ a $ of $ U $ is called \emph{quasi-nilpotent} if all its eigenvalues are equal; denote these eigenvalues by $ \operatorname{eig}(a) $. They are  elements of the algebraic closure $\bar{\FF}$
of the field $ \FF $. An associative subalgebra $A$ of $\End_\FF U$ is called \emph{quasi-nilpotent} if it consists of quasi-nilpotent elements.
For an associative algebra $ A $ we denote by $ A_- $ the Lie algebra obtained from $ A $ by taking the bracket $ [a,b] = ab-ba   $. We also let $\overline{A} =
\overline{\FF} \otimes_\FF A $, $\overline {U}=\overline{\FF} \otimes_\FF U $. 
\begin{lemma}\label{Lem1}
	Let $ A $ be a subalgebra of the associative algebra $ \End_\FF U $.
\begin{enumerate}
\item [(a)] If A is a quasi-nilpotent subalgebra, then in some basis of $ \overline{U}$,
  all endomorphisms $ a \in $ A have upper triangular matrices with $ eig(a) $ on the diagonal. In particular, $\operatorname{eig}(a+b) = \operatorname{eig}(a) + \operatorname{eig}(b)$
   for $a,b \in A$, and
$ A_- $ is a nilpotent Lie algebra.
	\item [(b)] If $ A_- $ is a nilpotent Lie algebra and A has a basis, consisting of quasi-nilpotent endomorphisms, then  $ A $ is a  quasi-nilpotent subalgebra.
\end{enumerate}
\end{lemma}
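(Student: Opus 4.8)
The plan is to extend scalars and study $\overline{A}=\overline{\FF}\otimes_\FF A$ as an abstract finite-dimensional associative algebra; we may assume $A\ni\operatorname{id}_U$ (replacing $A$ by $A+\FF\operatorname{id}_U$ changes nothing, since $a+c\operatorname{id}_U$ has all eigenvalues equal to $\operatorname{eig}(a)+c$, since $\operatorname{id}_U$ is central for the bracket, and since $\operatorname{id}_U$ is itself quasi-nilpotent). The hinge of both parts is an elementary remark: if every composition factor of $\overline{U}$, as an $\overline{A}$-module, is one-dimensional, then a basis of $\overline{U}$ refining a composition series makes every $a\in\overline{A}$ upper triangular with diagonal $\psi_1(a),\dots,\psi_n(a)$, where $\psi_\ell\colon\overline{A}\to\overline{\FF}$ is the character by which $\overline{A}$ acts on the $\ell$-th graded piece; each $\psi_\ell$ is an algebra homomorphism, hence $\FF$-linear on $A$, and $\{\psi_1(a),\dots,\psi_n(a)\}$ is the multiset of eigenvalues of $a$. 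Granting this remark, the remaining assertions of (a) follow at once: if $A$ is quasi-nilpotent then for $a\in A$ all $\psi_\ell(a)$ coincide, necessarily with $\operatorname{eig}(a)$, so $a$ is upper triangular with $\operatorname{eig}(a)$ on the diagonal; then $\operatorname{eig}(a+b)=\psi_1(a+b)=\psi_1(a)+\psi_1(b)=\operatorname{eig}(a)+\operatorname{eig}(b)$, and $a\mapsto a-\operatorname{eig}(a)\operatorname{id}_U$ is an $\FF$-linear map from $A_-$ to the strictly upper triangular matrices through which the bracket factors, so $A_-$ is a central extension of a nilpotent Lie algebra and hence nilpotent.

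Part (b) is then quick. As $A_-$ is nilpotent, so is $\overline{A}_-=\overline{\FF}\otimes_\FF A_-$, hence so is its Lie-algebra quotient $(\overline{A}/\operatorname{rad}\overline{A})_-\cong\prod_i\mathfrak{gl}_{d_i}(\overline{\FF})$, where $\overline{A}/\operatorname{rad}\overline{A}\cong\prod_i M_{d_i}(\overline{\FF})$ by Wedderburn--Artin over the algebraically closed field $\overline{\FF}$; a product of general linear Lie algebras is nilpotent only when all $d_i=1$, i.e.\ when $\overline{A}/\operatorname{rad}\overline{A}$ is commutative, in which case every simple $\overline{A}$-module — so every composition factor of $\overline{U}$ — is one-dimensional. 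The remark above then supplies the $\psi_\ell$: for a basis vector $a_i$ of $A$, which is quasi-nilpotent, $\psi_\ell(a_i)$ is independent of $\ell$, whence by $\FF$-linearity $\psi_\ell(a)$ is independent of $\ell$ for every $a\in A$, i.e.\ $A$ is quasi-nilpotent.

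For (a), all that remains is to show that, when $A$ is quasi-nilpotent, no simple subquotient $W$ of the $\overline{A}$-module $\overline{U}$ has $d:=\dim_{\overline{\FF}}W\ge2$. One cannot simply decompose $\overline{A}$, which need not be quasi-nilpotent; instead I would use the Jacobson density theorem. A hypothetical $W$ with $d\ge2$ would give a surjection $\overline{A}\twoheadrightarrow\operatorname{End}_{\overline{\FF}}W=M_d(\overline{\FF})$, so the image $S$ of $A$ under $A\hookrightarrow\overline{A}\twoheadrightarrow M_d(\overline{\FF})$ would be an $\FF$-subalgebra with $\overline{\FF}\cdot S=M_d(\overline{\FF})$, all of whose elements are quasi-nilpotent (eigenvalues of $a\in A$ on a subquotient of $\overline{U}$ lie among those of $a$ on $\overline{U}$, hence all equal $\operatorname{eig}(a)$). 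It then remains to exclude such an $S$ for $d\ge2$. First, $S$ has a non-nilpotent, hence invertible, element — else $S$ is a nil, therefore nilpotent, algebra, incompatible with $\overline{\FF}\cdot S=M_d(\overline{\FF})$ — and Cayley--Hamilton puts $\operatorname{id}$ in $S$. Second, a nonzero nilpotent $t\in S$ is non-invertible, so $st$ and $ts$ are non-invertible, hence nilpotent as quasi-nilpotent elements of $S$; therefore the $\overline{\FF}$-span of the nilpotents of $S$ is a two-sided ideal of $M_d(\overline{\FF})$ lying in the proper subspace of traceless matrices, so it is $0$, i.e.\ $S$ has no nonzero nilpotent. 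Hence $S$ is semisimple without nilpotents, a product of division algebras, and the simplicity of $\overline{\FF}\otimes_\FF S\cong M_d(\overline{\FF})$ forces $S$ to be a single division $\FF$-algebra, necessarily central (else $\overline{\FF}\otimes_\FF S$ is not a matrix algebra over $\overline{\FF}$), of degree $d$. When $\FF$ is perfect — in particular finite — this is already contradictory: $\operatorname{eig}(s)\in\FF$, so $s-\operatorname{eig}(s)\operatorname{id}\in S$ is nilpotent, hence $0$, so every $s\in S$ is $\operatorname{eig}(s)\operatorname{id}$, contradicting $\overline{\FF}\cdot S=M_d(\overline{\FF})$ with $d\ge2$. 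For general $\FF$ one notes instead that the reduced characteristic polynomial of each $s\in S$ is $(T-\operatorname{eig}(s))^d$; this forces $d$ to be a power of $\operatorname{char}\FF$ and the reduced trace to vanish identically on $S$, contradicting the nondegeneracy of the reduced trace form of a central simple algebra.

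The step I expect to be the main obstacle is exactly this last ``key lemma'': a quasi-nilpotent subalgebra cannot, after extension of scalars, surject onto $M_d(\overline{\FF})$ with $d\ge2$. The decisive idea is the two-sided absorption of the nilpotent elements of $S$, which uses only that a product involving a non-invertible matrix is non-invertible; once $S$ is reduced to a division algebra the rest is routine, except for the positive-characteristic subtlety that $\operatorname{eig}(s)$ need not lie in $\FF$, which is what forces the detour through reduced traces rather than the one-line conclusion available over perfect fields. Everything else — Wedderburn--Artin, Jacobson density, and the bookkeeping with composition series and adapted bases — is standard.
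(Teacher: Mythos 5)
Your proof is correct in substance and, for the crucial half of part (a), takes a genuinely different route from the paper. For part (b) you and the paper do essentially the same thing: pass to $\overline{A}$, use Burnside/Wedderburn to see that nilpotence of $\overline{A}_-$ forces every simple block $M_{d_i}(\overline{\FF})$ to have $d_i=1$, triangularize, and then use $\FF$-linearity of the diagonal characters together with the quasi-nilpotent basis (you spell out this last linearity step, which the paper leaves implicit). The divergence is in part (a). The paper's proof rests on the assertion that if $A$ is quasi-nilpotent then so is $\overline{A}=\overline{\FF}\otimes_\FF A$, from which $m_i=1$ follows immediately since $\End_{\overline{\FF}}\overline{\FF}^{m_i}$ cannot be quasi-nilpotent for $m_i\geq 2$. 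That assertion is true but not obvious --- quasi-nilpotence is not preserved by $\overline{\FF}$-linear combinations in general, and it is really a consequence of the lemma rather than an input to it. You avoid it: via Jacobson density you look at the image $S$ of $A$ itself (not of $\overline{A}$) in $\End_{\overline{\FF}}W$ for a composition factor $W$ of dimension $d\geq 2$, and kill it by showing (Cayley--Hamilton puts $\operatorname{id}$ in $S$; products of a nilpotent with anything in $S$ are nilpotent, so the span of the nilpotents is a two-sided, trace-zero, hence zero, ideal of $M_d(\overline{\FF})$) that $S$ would have to be a division algebra, which is impossible for $d\geq 2$. This costs more machinery but buys a proof that does not presuppose the base-changed algebra is quasi-nilpotent, and it yields the ``in particular'' clauses of (a) explicitly. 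One small imprecision in your general-field endgame: you only know that $\overline{\FF}\otimes_\FF S\to M_d(\overline{\FF})$ is surjective, not an isomorphism, so centrality of the division algebra $S$ over $\FF$ is not immediate; the fix is to replace $\FF$ by $K=Z(S)$, which embeds in $\overline{\FF}$ as scalar matrices because $S$ spans $M_d(\overline{\FF})$, and run the reduced-trace argument over $K$. Over a perfect field --- in particular over the finite fields to which the paper applies the lemma --- your one-line conclusion via $\operatorname{eig}(s)\in\FF$ is complete as written.
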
 
\begin{proof}
  Burnside's theorem says that any subalgebra of the
  $ \overline{\FF} $
  -algebra   $\End_ {\overline{\FF}} \overline{U} $
  , where $ \overline{U} $ is a finite-dimensional vector space over $ \overline{\FF} $, which acts irreducibly on $ \overline{U} $, coincides with $\End \overline{U} $. Hence, in some basis of $\overline{U}$ the algebra $ \overline{A} $ consists of upper triangular block matrices with blocks $\End_{ \overline{\FF}}  \overline{\FF}^{m_i} $ on the diagonal, where $ m_i \geq 1 , \sum_i m_i = \dim \bar{U} $.  

If $ A $ is a quasi-nilpotent subalgebra, then so is $ \overline{A} $, and,
in particular, $ \End_ {\overline{\FF}} \overline{\FF}^{m_i} $ for all $ i $. This implies that all $ m_i = 1 $. Hence $ \overline{A} $ consists of upper triangular quasi-nilpotent matrices. This proves (a).

In order to prove $ (b) $, note that if $ A_- $ is a nilpotent Lie algebra, then so is
$\bar{A}_-$, and, in particular so are all $ (\End_{\bar{\FF}} \overline{\FF}^{m_i})_- $. It follows that all $ m_i = 1 $, so that $ \overline{A}_- $ consists of upper triangular matrices in some basis of $ \overline{U} $. Since $A$ has a basis, consisting of quasi-nilpotent elements,
the subalgebra $A$ is quasi-nilpotent. This proves $ (b) $.
\end{proof}
\begin{corollary}\label{cor1}
  A subalgebra $ A $ of the associative algebra $\End_\FF U $ is quasi-nilpotent
  if and only if the Lie algebra $ A_- $ is nilpotent and $ A $ has a basis, consisting of quasi-nilpotent endomorphisms. \qed
\end{corollary}

\section{Criterion of absolute indecomposability.}
Let $ \pi = (\{U_v\} , \{U_{v} \rightarrow U_{w}\}) $ be a representation of a quiver $ (\Gamma , \Omega) $ over a field $ \FF $, of dimension
$ \alpha = \sum_{v \in \mathcal{V}}  \ n_v \ \alpha_v $. Let $ U = \underset{v \in \mathcal{V}}{\oplus} U_v $. Then the space $ \operatorname{Hom}_\FF (U_{v}, U_{w}) $ is naturally identified with a subspace  of $ \End_\FF U $, so that the representation $ \pi $ is identified with a
collection of endomorphisms for each oriented edge $ v \rightarrow w $ of
the quiver $ (\Gamma , \Omega )$: $\{ \pi_{v,w} : U_{v} \rightarrow U_{w}   \}
\subset \End_\FF U$.
An endomorphism $ a $  of $ \pi $ decomposes as $ a = \sum_{v \in \mathcal{V}} a _v$, where $ a_v \in \End_\FF U_v \subset \End_\FF U $, and the condition that $ a \in \End \pi$, the algebra of endomorphisms of $ \pi $, means that 
\begin{equation}\label{5}
a_w \pi_{v, w} = \pi_{v,w} a_v \text{ for all oriented edges } v \rightarrow w.
\end{equation}
This simply means that the block diagonal endomorphism $ a $ commutes with all endomorphisms $ \pi_{v,w} $ in the algebra $ \End_\FF U. $ Note that (\ref{5}) has an obvious solution
$a_v=cI_{U_v}, v\in \V$, where $c\in \FF$, hence $\dim \End \pi \geq 1$. In the case of equality,
$\al$ lies in $\Delta_+$, and it is called a \emph{Schur vector}; in this and only in this case
a generic representation of dimension $\al$ is absolutely indecomposable \cite{K82}.

  \begin{lemma}\label{lem2}
    The representation $ \pi $ is absolutely indecomposable if and only if the algebra of its
    endomorphisms $ \End \pi $ is quasi-nilpotent in $ \End_\FF U. $
  \end{lemma}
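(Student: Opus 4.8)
The plan is to prove both directions by relating decompositions of $\pi$ (over $\overline{\FF}$) to idempotents in $\End\pi$, and then to invoke Lemma~\ref{Lem1} and Corollary~\ref{cor1} to translate "quasi-nilpotent" into the statement that $\End\pi$ contains no nontrivial idempotents. First I would record the standard fact that for a finite-dimensional representation, $\pi$ is indecomposable over $\FF$ if and only if $\End\pi$ has no idempotents other than $0$ and the identity $I_U$: an idempotent $e\in\End\pi$ gives a direct sum decomposition $\pi \cong e\pi \oplus (I_U-e)\pi$ as representations of the quiver (here $e$ block-diagonal and commuting with all $\pi_{v,w}$ means each $eU_v$ is a subrepresentation), and conversely a nontrivial decomposition furnishes a nontrivial idempotent. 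Applying this over $\overline{\FF}$: since $\overline{\End\pi} = \End\overline{\FF}\otimes_\FF \End\pi$ is canonically the endomorphism algebra $\End\bar{\pi}$ of the base-changed representation $\bar{\pi} = \overline{\FF}\otimes_\FF\pi$, we get that $\pi$ is absolutely indecomposable if and only if the finite-dimensional associative $\overline{\FF}$-algebra $\overline{\End\pi}$ has no idempotents besides $0$ and $1$.

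Next I would identify "no nontrivial idempotents in a finite-dimensional algebra over an algebraically closed field" with "the algebra is local," i.e. the quotient by its Jacobson radical is $\overline{\FF}$ itself (equivalently, it is a matrix algebra of size one since $\overline{\FF}$ is algebraically closed and the semisimple quotient would otherwise split off a proper idempotent that lifts, as the radical is nilpotent). So absolute indecomposability of $\pi$ is equivalent to: $B := \overline{\End\pi}$ is local with $B/\operatorname{rad} B \cong \overline{\FF}$. It then remains to see that $B$ is local in this sense exactly when $\End\pi$ is quasi-nilpotent as a subalgebra of $\End_\FF U$. For the forward direction, if $B$ is local, then every element of $B$ is either a unit or lies in $\operatorname{rad} B$; writing $a = c\cdot 1 + n$ with $c\in\overline{\FF}$ the scalar by which $a$ acts on the one-dimensional $B/\operatorname{rad} B$ and $n\in\operatorname{rad} B$ nilpotent, we see $a - c\cdot 1$ is nilpotent in $B\subset\End_{\overline{\FF}}\overline{U}$, so $a$ has $c$ as its only eigenvalue; hence every element of $\End\pi$ (which sits inside $B$) is quasi-nilpotent, and $\End\pi$ is a quasi-nilpotent subalgebra.

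For the converse, suppose $\End\pi$ is quasi-nilpotent in $\End_\FF U$. By Lemma~\ref{Lem1}(a), in some basis of $\overline{U}$ every $a\in\End\pi$, hence every element of $\overline{\End\pi} = B$ (being an $\overline{\FF}$-span of such $a$), is upper triangular with $\operatorname{eig}(a)$ repeated on the diagonal; in particular every element of $B$ is upper triangular with constant diagonal, so the map $B\to\overline{\FF}$, $b\mapsto(\text{common diagonal entry of }b)$, is an algebra homomorphism whose kernel consists of strictly upper triangular, hence nilpotent, elements — that kernel is therefore a nilpotent two-sided ideal, so it equals $\operatorname{rad} B$ and $B/\operatorname{rad} B\cong\overline{\FF}$, i.e. $B$ is local. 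Combined with the first paragraph this gives that $\pi$ is absolutely indecomposable, completing the equivalence. The main obstacle I anticipate is purely expository: being careful that idempotents and decompositions of $\pi$ correspond correctly under the block-diagonal commutation condition \eqref{5}, and that the identification $\overline{\End\pi} = \End\bar{\pi}$ is the right one — there is no deep difficulty, since Lemma~\ref{Lem1} does the real structural work, but one must phrase the local-algebra step cleanly rather than re-deriving Fitting's lemma by hand.
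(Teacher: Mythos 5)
Your proof is correct, but it takes a genuinely different route from the paper's. The paper argues pointwise: for each $a\in\End\pi$ it takes the Jordan decomposition $a=a_{(s)}+a_{(n)}$ over $\overline{\FF}$, notes that $a_{(s)}$ (being a polynomial in $a$) still commutes with all the $\pi_{v,w}$, and observes that the eigenspace decomposition of $\overline{U}$ under $a_{(s)}$ is a decomposition of the representation; hence $\pi$ is absolutely indecomposable precisely when every $a_{(s)}$ is scalar, i.e.\ every $a$ is quasi-nilpotent. That argument is short, self-contained, and does not invoke Lemma~\ref{Lem1} at all. You instead go through the standard ring-theoretic dictionary: decompositions correspond to idempotents, $\overline{\End\pi}\cong\End\bar\pi$ under base change, absence of nontrivial idempotents in a finite-dimensional algebra over $\overline{\FF}$ is equivalent to being local with residue field $\overline{\FF}$ (via lifting idempotents through the nilpotent radical), and locality is matched with quasi-nilpotence using Lemma~\ref{Lem1}(a) for the harder direction. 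Your version is longer but has the virtue of making fully explicit two points the paper treats tersely: the identification $\End\bar\pi=\overline{\FF}\otimes_\FF\End\pi$, and the fact that quasi-nilpotence of $\End\pi$ over $\FF$ really does control decomposability over $\overline{\FF}$ (where the relevant idempotent lives in $\End\bar\pi$, not in $\End\pi$). All the individual steps you cite — the idempotent/decomposition correspondence for block-diagonal $e$ commuting with the $\pi_{v,w}$, the locality criterion, and the triangularization from Lemma~\ref{Lem1}(a) showing the strictly upper triangular part of $\overline{\End\pi}$ is a nilpotent ideal with quotient $\overline{\FF}$ — are sound, so there is no gap; the trade-off is simply elementary directness (the paper) versus structural transparency (your version).
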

\begin{proof}
  An endomorphism $ a  \in \End \pi \subset \End_\FF U \subset \End_{\bar{\FF}} \overline{U}  $ decomposes
  in a sum of commuting endomorphisms $ a = a_{(s)} + a_{(n)},  $ where the endomorphism
  $ a_{(s)} $ is diagonalizable and the endomorphone $ a_{(n)} $ is nilpotent (Jordan decomposition). Condition \eqref{5} means that $ a $ commutes with $ \pi_{v,w} $ for all
  oriented edges $ v \rightarrow w. $ By a well-known fact of linear algebra, it follows that the $ \pi_{v,w} $ commute with $ a_{(s)} $. But then the decomposition of $\overline{ U} $ in a direct sum of eigenspaces of $ a_{(s)} $ is a decomposition of the representation $ \pi $ in a direct sum of representation of the quiver $ (\Gamma, \Omega)$. Thus, $\pi $ is absolutely
  indecomposable if and only if $ a_{(s)} $ is a scalar endomorphism of $\overline{U}, $ which is equivalent to say that $ a $ is a quasi-nilpotent endomorphism of $ U. $
\end{proof}

\section{Main Theorem}
The following is the main result of the paper.
\begin{theorem}\label{th2}
  Let $\FF_q$ be a fixed finite field. Then there exists an algorithm which, given as input a
  quiver  $(\Gamma, \Omega)$ and its representation $ \pi = (\{U_v\}, \{U_{v} \rightarrow U_{w} \} )$ over $\FF_q$ of dimension $ \sum_{v \in \mathcal{V}} n_v \alpha_v, $
  can decide in polynomial in $ N:= \sum_v n_v $ time whether $ \pi $ is absolutely indecomposable or not.
\end{theorem}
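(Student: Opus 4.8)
The plan is to reduce absolute indecomposability, via Lemma~\ref{lem2}, to deciding whether the finite-dimensional algebra $\End \pi \subset \End_\FF U$ is quasi-nilpotent, and then to make that check algorithmic using Corollary~\ref{cor1}. First I would compute $\End \pi$: condition \eqref{5} is a homogeneous system of linear equations in the $\sum_v n_v^2 \le N^2$ unknown matrix entries of $a = \sum_v a_v$, with one block of equations per oriented edge, so Gaussian elimination over $\FF_q$ produces a basis $\{b_1,\dots,b_d\}$ of $\End\pi$ in time polynomial in $N$ (note $d \le N^2$, and the ambient space $\End_\FF U$ has dimension at most $N^2$ as well since $U$ is identified with the internal direct sum). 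All subsequent computations take place inside $\End_\FF U$, a fixed associative algebra of dimension $\le N^2$ over the fixed field $\FF_q$, and products, brackets, and linear-algebra operations there cost $\mathrm{poly}(N)$.

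Next, by Corollary~\ref{cor1}, $\End\pi$ is quasi-nilpotent if and only if (i) the Lie algebra $(\End\pi)_-$ is nilpotent and (ii) $\End\pi$ has a basis of quasi-nilpotent endomorphisms. For (i), I would compute the descending central series $\fa_1 = (\End\pi)_-$, $\fa_{k+1} = [(\End\pi)_-, \fa_k]$, each term being the $\FF_q$-span of the brackets $[b_i, x]$ over a basis of $\fa_k$; since $\dim \fa_{k+1} < \dim \fa_k$ whenever $\fa_{k+1} \ne \fa_k$ and strictly decreases until it stabilizes, the series stabilizes after at most $N^2$ steps, and nilpotency is exactly the condition that it stabilizes at $0$ — all of this is $\mathrm{poly}(N)$. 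For (ii), the key observation is that it suffices to test a single, canonically chosen basis: for instance, from the basis $\{b_1,\dots,b_d\}$ produced above, test whether \emph{each} $b_i$ is quasi-nilpotent. This does not a priori give an ``if and only if'', so instead I would argue as follows. By Lemma~\ref{Lem1}(a), if $\End\pi$ is quasi-nilpotent then \emph{every} element is quasi-nilpotent, so in particular the computed basis elements are; conversely, if $(\End\pi)_-$ is nilpotent and the computed basis elements are all quasi-nilpotent, then Lemma~\ref{Lem1}(b) applies verbatim and $\End\pi$ is quasi-nilpotent. Hence the algorithm is: compute $\End\pi$; check nilpotency of $(\End\pi)_-$; check that each basis element $b_i$ is quasi-nilpotent; answer ``yes'' iff both checks pass — and this correctly decides absolute indecomposability.

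The one remaining ingredient is the subroutine ``given $b \in \End_{\FF_q} U$, decide whether $b$ is quasi-nilpotent,'' i.e.\ whether all eigenvalues of $b$ in $\bar\FF_q$ coincide. Here I would compute the characteristic polynomial $\chi_b(t) \in \FF_q[t]$ of $b$ — a determinant of an $n \times n$ matrix with $n \le N$, doable in $\mathrm{poly}(N)$ — and then test whether $\chi_b(t)$ is a power of a single irreducible polynomial, equivalently whether $\chi_b(t) = g(t)^n$ for $g(t)$ an irreducible factor; equivalently, $b$ is quasi-nilpotent iff the radical (squarefree part) $\chi_b / \gcd(\chi_b, \chi_b')$ of $\chi_b$ is irreducible over $\FF_q$. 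The squarefree part is computed by one $\gcd$; testing irreducibility of a polynomial of degree $\le N$ over the fixed finite field $\FF_q$ is a standard $\mathrm{poly}(N)$ task (e.g.\ via the distinct-degree factorization / the criterion $x^{q^k} \equiv x$ manipulations). This settles the subroutine.

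The step I expect to be the main obstacle — or at least the only subtle point — is justifying that testing just one (algorithmically produced) basis in condition (ii) suffices, rather than needing to quantify over all bases; the resolution above is that the ``only if'' direction is handed to us for free by Lemma~\ref{Lem1}(a) (all elements are quasi-nilpotent), so no quantification is needed. Everything else is an assembly of standard polynomial-time linear-algebra and finite-field routines over the fixed ground field $\FF_q$, with all dimensions bounded by $\mathrm{poly}(N)$; the total running time is therefore polynomial in $N$.
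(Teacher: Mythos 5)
Your overall architecture coincides with the paper's proof: reduce to quasi-nilpotency of $\End \pi$ via Lemma~\ref{lem2}, split that check into the two conditions of Corollary~\ref{cor1}, compute a basis of $\End \pi$ by Gaussian elimination on the linear system \eqref{5}, test nilpotency of $(\End \pi)_-$ by computing the lower central series, and test quasi-nilpotency of each basis element through its characteristic polynomial; your resolution of the ``which basis?'' worry (Lemma~\ref{Lem1}(a) gives the only-if direction for every element, Lemma~\ref{Lem1}(b) gives the if direction for any single basis) is also exactly the paper's. The genuine problem is in your subroutine for deciding whether a single endomorphism $b$ is quasi-nilpotent. Quasi-nilpotent means all eigenvalues of $b$ in $\overline{\FF}_q$ coincide, i.e.\ $\chi_b(t)=(t-\gamma)^n$ for a single $\gamma$ (which then automatically lies in $\FF_q$ by separability, as the paper notes after \eqref{eq6}). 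Your criterion --- that the squarefree part of $\chi_b$ be irreducible over $\FF_q$ --- also accepts any $b$ whose characteristic polynomial is a power of one irreducible polynomial of degree $d>1$; such a $b$ has $d>1$ distinct eigenvalues in $\overline{\FF}_q$ and is \emph{not} quasi-nilpotent.

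This is not a corner case but precisely the phenomenon separating ``indecomposable over $\FF_q$'' from ``absolutely indecomposable.'' For example, take the Kronecker quiver (two arrows $v_1\to v_2$), $U_{v_1}=U_{v_2}=\FF_q^2$, with the two arrows realized by $I$ and by the companion matrix $C$ of an irreducible quadratic $f$. Then $\End\pi=\{(a,a): aC=Ca\}\cong \FF_q[C]\cong\FF_{q^2}$ is commutative, so your nilpotency check passes, and the basis element $(C,C)$ has characteristic polynomial $f^2$ with irreducible radical $f$, so your quasi-nilpotency check passes as well; your algorithm outputs YES. Yet over $\FF_{q^2}$ the matrix $C$ diagonalizes and $\pi$ splits into the two eigenspace subrepresentations, so $\pi$ is not absolutely indecomposable (it is merely indecomposable, $\End\pi$ being a field). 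The correct test is the paper's \eqref{eq6}: check whether $\chi_b(t)=(t-\gamma)^n$ for some $\gamma\in\FF_q$, e.g.\ by trying all $q$ values of $\gamma$ (or testing nilpotency of $b-\gamma I$); this remains polynomial in $N$ since $q$ is fixed. A secondary, also fixable, defect of the same subroutine: in characteristic $p$ the formula $\chi_b/\gcd(\chi_b,\chi_b')$ fails to compute the radical when $\chi_b'=0$, e.g.\ for $\chi_b=(t-\gamma)^p$.
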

\begin{proof}
  By Lemma \ref{lem2} one has to check whether $ \End \pi \subset \End_{\FF_q} U, $ where
  $ U = \underset{v \in \V}{\bigoplus} U_v $, consists of quasi-nilpotent elements.
By Corollary \ref{cor1} one has to check two things :
\begin{enumerate}
	\item[(i)] $ \End \pi $ has a basis, consisting of quasi-nilpotent elements;
	\item[(ii)] the Lie algebra $ ( \End \pi)_- $ is nilpotent.
\end{enumerate}
For this we identify $ U_v $ with the vector space $ \FF^{n_v}_q, $ so that $ U $ is identified with $ \FF^N_q $ and $ \End_{\FF_q} U $ with the algebra of $ N \times N $- matrices over $ \FF_q. $ $ \End \pi $ is a subspace of $ \End_{\FF_q} U, $ given by linear homogeneous equation \eqref{5}, hence, using Gauss elimination, we can construct in polynomial in $ N $ time a basis $ a_1, \ldots, a_m $ of $ \epi, $ where $ m \leq N. $

First, we check that all the $ a_i $ are quasi-nilpotent. This simply means that
\begin{equation}\label{eq6}
{\det}_{U} (\la I_N + a_i) = (\lambda + \gamma_i)^N,\, \text{where} \, \gamma_i \in \overline{\FF}_q.  
\end{equation}
The left-hand side of (\ref{eq6}) can be computed in polynomial in $N$ time by Gauss elimination. By the separability of $\bar{\FF}_q$ over $\FF_q$, (\ref{eq6}) implies that all $\gamma_i$ lie in $\FF_q$. Hence we have to check that (\ref{eq6}) holds for each $i$ and some element $\gamma_i \in \FF_q$, which can be done in polynomial in $ N $ time.

Second, we check that $ (\epi)_- $ is a nilpotent Lie algebra.  Recall that a Lie algebra $ \fg $ of dimension $ m $ is nilpotent if and only if the member $ \fg^m $ of the sequence of subspaces, defined inductively by 
\[ \fg^1 = \fg, \quad \fg^j = [\fg , \fg^{j-1}] \text{ for } j \geq 2, \]
is zero. Given a basis $ \{ a_i \} $ of $ \fg $ (which we already have), the subspace $ \fg^2 $ is the span over $ \FF_q $ of all commutators $[a_i , a_j  ].  $ Using Gauss elimination, construct a basis $ \{ b_i   \} $ of $ \fg^2. $ Next, $ \fg^3 $ is the span of commutators $ [a_i , b_j   ] , $ and again, using Gauss elimination, choose a basis $ \{ c_i  \} $ of $ \fg^3, $ etc.  The Lie algebra $ \fg $ is nilpotent if and only if $ \fg^m = 0. $
\end{proof}

\section{A brief discussion on P vs NP}
In terms of matrices over $ \FF_q, $ a representation $\pi$ over $\FF_q$ of a quiver $ (\Gamma, \Omega) $ of dimension $ \alpha = \sum_{v \in \mathcal{V} } n_v \alpha_v \in Q_+ $ is a collection of $ n_{w} \times n_{v} $ matrices $ \pi_{v, w} $ over $ \FF_q $ for each oriented edge  
$ v \longrightarrow \hspace{-1ex} w  $. An endomorphism of $ \pi  $ is a collection of $ n_v \times n_v $ matrices $ a_v $ over $ \FF_q $ for each vertex $ v \in \V $, such that the linear homogeneous equations \eqref{5} hold. The representation $ \pi $ is absolutely indecomposable if for each endomorphism of $ \pi $ all matrices $ a_v, v \in \V $, are quasi-nilpotent (equivalently, by
Corollary \ref{cor1}, $ \epi $ has a basis of elements with this property).

The following discussion was outlined to me by Mike Sipser. Given a representation $ \pi $
over a fixed finite field $\FF_q$ of a quiver $ (\Gamma, \Omega) $ of dimension $ \al \in \Delta_+ $, which is a collection of
$ M_\alpha := \sum_{v \rightarrow w} n_v n_w$ numbers from $ \FF_q $, the output is YES if $ \pi $ is absolutely indecomposable and NO otherwise. Call this problem INDEC; it is a P problem, according to Theorem \ref{th2}. Define a generalization of INDEC, where some of the numbers are replaced by variables $ x_i, \ i=1, \ldots, M $, where $M$ is an integer, such that
$1\leq M \leq M_\al$,  and call this problem
INDEC$ [x_1, \ldots, x_{M}] $. Say YES for the latter problem if there exist $ \gamma_1, \ldots, \gamma_{M} \in \FF_q $ we can substitute for $ x_1, \ldots, x_{M} $, such that the resulting INDEC problem is YES. Obviously INDEC is in P implies that INDEC$ [x_1, \ldots, x_{M}] $ is in NP.

Now assume that INDEC$[x_1,...,x_{M_\al}]$ is actually in P.
We give a polynomial in $ M_\al $ time procedure to output an absolutely indecomposable representation. Test INDEC$ [x_1, \ldots, x_{M_\al}] $. The answer is YES by Theorem \ref{th1}(d).
Now reduce $ M_\al $ by 1, by trying all possible numbers from $ \FF_q $ in place of
$ x_{M_\al} $ and test INDEC$ [x_1, \ldots, x_{M_\al -1}] $ for each of these numbers. The answer must be YES for at least one of these numbers. Repeat this procedure until we find all $ M_\al $ numbers. That is our answer. 

\section{ Conjectures and Examples}

\begin{conjecture}\label{con1}
  INDEC$[x_1, \ldots, x_{M_\al}]$
  is not in P.
\end{conjecture}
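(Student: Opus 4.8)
The plan is to prove the sharper statement that INDEC$[x_1,\ldots,x_M]$ is \emph{NP-complete}. Membership in NP is already recorded in the text (guess the substitution $x_i\mapsto\gamma_i$ and verify in polynomial time by Theorem~\ref{th2}), so the work is to exhibit a polynomial-time reduction from some NP-complete problem to INDEC$[x_1,\ldots,x_M]$; this yields Conjecture~\ref{con1} under the standard hypothesis $\mathrm{P}\neq\mathrm{NP}$. I do not expect to prove the literal statement unconditionally, since ``not in P'' for a problem lying in NP is itself a separation of $\mathrm{P}$ from $\mathrm{NP}$. It is worth noting that for the extreme case $M=M_\al$ the problem is in P --- it reduces to deciding whether $\al\in\Delta_+$, which the height-reducing simple-reflection algorithm does in time polynomial in $N$ and $\#\V$ --- so any hardness must come from instances in which the set of variable positions is itself part of the (adversarial) input; this is the regime the reduction has to exploit.

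For the reduction, fix a convenient NP-complete source problem $S$ --- $3$-SAT, or a linear-algebra problem over $\FF_q$ such as low-rank matrix completion or syndrome decoding --- and construct a polynomial-time map sending an instance $I$ of $S$ to a quiver $(\Gamma,\Omega)$, a dimension vector $\al\in\Delta_+$, and a partial representation $\pi(I)$ with designated variable entries $x_1,\ldots,x_M$, so that $I$ is a YES-instance of $S$ if and only if some substitution $x_i\mapsto\gamma_i\in\FF_q$ makes $\pi(I)$ absolutely indecomposable. By Lemma~\ref{lem2} and Corollary~\ref{cor1} the condition to be engineered is that $\epi$ --- a subspace of $\End_{\FF_q}U$ cut out by the linear equations~\eqref{5}, whose coefficients are polynomials in the matrix entries --- has a basis of quasi-nilpotent endomorphisms and $(\epi)_-$ is a nilpotent Lie algebra; the natural way to design the gadgets is directly at the level of the invariants used in the proof of Theorem~\ref{th2}, namely the characteristic polynomial ${\det}_U(\la I_N+a_i)$ and the lower central series of $(\epi)_-$. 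One would use the variable entries to carry Boolean data and fixed ``gadget'' blocks in $\pi(I)$ so that (i) on a generic completion $\epi$ is forced to be as small as possible, and (ii) a clause of $I$ is violated exactly when some element of $\overline{\epi}$ acquires a nontrivial idempotent --- equivalently, when $\pi(I)\otimes\overline{\FF_q}$ decomposes into a direct sum.

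The step I expect to be the main obstacle is (i): absolute indecomposability is a \emph{generic} property. Whenever $\al$ is a Schur vector a Zariski-dense set of representations of dimension $\al$ is already absolutely indecomposable, so a careless placement of the variable entries makes INDEC$[x_1,\ldots,x_M]$ answer YES vacuously and carry no information about $I$. To get genuine hardness one must either keep $M$ small and put the variables in positions that pin down the whole isomorphism class of $\pi(I)$, or --- more promisingly --- work with non-Schur imaginary roots $\al\in\Delta^{\text{im}}_+$, for which $\dim\epi>1$ on a generic representation and the requirement ``$\epi$ quasi-nilpotent'' is a genuinely restrictive and delicate condition that the assignment can switch on and off. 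A secondary difficulty is that the field $\FF_q$ is fixed while the $x_i$ range over all of $\FF_q$: the reduction must include auxiliary fixed structure that forces each relevant entry into a prescribed two-element subset of $\FF_q$ (for instance by letting a single Boolean variable drive two synchronized entries of $\pi(I)$, or by a fixed block that is quasi-nilpotent for exactly two values of the entry), while keeping $\#\V$ and the coordinates $n_v$ --- hence $N=\sum_v n_v$ --- polynomial in $|I|$. Once such a reduction is in place, NP-hardness of INDEC$[x_1,\ldots,x_M]$, and therefore Conjecture~\ref{con1} modulo $\mathrm{P}\neq\mathrm{NP}$, follows.
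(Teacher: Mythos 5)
This statement is a \emph{conjecture} in the paper: no proof is given, only a heuristic motivation (a generic representation of dimension $\al$ need not be absolutely indecomposable, so the search version looks like ``finding a needle in a haystack''), so there is no proof of the paper's to compare yours against. Measured on its own terms, your proposal is a sensible research plan but it is not a proof: the entire mathematical content --- the polynomial-time reduction from an NP-complete problem to INDEC$[x_1,\ldots,x_M]$, i.e.\ the actual gadgets --- is never constructed. You correctly identify the framework (NP-hardness would give the conjecture modulo $\mathrm{P}\neq\mathrm{NP}$, and nothing unconditional can be expected since the problem is in NP), you correctly observe that the literal instance with all $M_\al$ entries variable is trivial (indeed, by Theorem~\ref{th1}(d) the answer is YES whenever $\al\in\Delta_+$, which is part of the problem's promise, so the hardness must live in the family with the variable positions as input), and you correctly flag the genericity obstruction for Schur roots. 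But naming the obstacles is not the same as overcoming them; as written, the proposal establishes nothing beyond what the paper already records (membership in NP).

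The obstacle you call ``main'' is in fact the crux, and it is worth being precise about why it is hard. To encode a clause of a SAT instance you need the quasi-nilpotency of $\epi$ (equivalently, by Lemma~\ref{lem2}, absolute indecomposability) to flip as a \emph{controlled, local} function of the variable entries, while Theorem~\ref{th1} tells you that the global count of absolutely indecomposable representations of dimension $\al$ is a rigid polynomial $P_{\Gamma,\al}(q)$ independent of orientation --- there is very little room to hide combinatorial structure. Moreover $\dim\epi$ and the lower central series of $(\epi)_-$ jump on closed subvarieties, so the YES-set of a partial representation is a constructible set whose geometry you would have to control uniformly in $|I|$; nothing in your sketch addresses how the fixed blocks force this. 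Until a concrete reduction is exhibited (or, alternatively, until someone proves the opposite, as the paper's Conjectures~\ref{con2} and~\ref{con3} and Examples~\ref{ex1}--\ref{ex3} suggest may happen for large classes of $\al$), the statement remains exactly what the paper says it is: a conjecture.
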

\begin{conjecture}\label{con2}
 INDEC$[x_1, \ldots, x_{M_\al}]$ is in P for any quiver $ (\Gamma , \Omega   ) $ if $ \alpha \in \Delta_+ $ is a Schur
  vector.
\end{conjecture}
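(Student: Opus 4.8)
The statement is a conjecture, so what follows is a strategy. The plan is to attempt it by reducing, for a Schur vector $\alpha$, the problem INDEC$[x_1,\dots,x_{M_\al}]$ — and, as the self-reduction in the P versus NP discussion above requires, the problems INDEC$[x_1,\dots,x_M]$ in which the remaining $M_\al-M$ slots have been filled with prescribed elements of $\FF_q$ — to a polynomial-time genericity test. The key input is the property recalled just before Lemma~\ref{lem2}: if $\alpha$ is a Schur vector then a generic representation of dimension $\alpha$ is absolutely indecomposable \cite{K82}. Equivalently, inside the representation variety $R(\alpha)=\bigoplus_{v\to w}\mathrm{Mat}_{n_w\times n_v}$ the \emph{Schur locus} $R^{\mathrm{s}}(\alpha)=\{\pi\mid\End\pi=\FF\cdot I\}$ is a non-empty Zariski-open subset — open because $\dim\End\pi$ is upper semicontinuous and always at least $1$ — and $R^{\mathrm{s}}(\alpha)\subseteq R^{\mathrm{ai}}(\alpha)$, the absolutely indecomposable locus, since $\overline{\FF}_q\cdot I$ is a field, hence a local ring. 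Thus, after forming by Gauss elimination the affine subspace $L\subseteq R(\alpha)$ of all completions of the given partially specified representation, the problem is exactly whether $L(\FF_q)$ meets $R^{\mathrm{ai}}(\alpha)$.

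First I would run the genericity test. The algorithm of Theorem~\ref{th2} adapts to any field (quasi-nilpotency of a matrix is decided from its characteristic polynomial, and the criterion of Corollary~\ref{cor1} is field-independent), so running it over the rational function field $\FF_q(x_s)$ indexing the variable slots decides in polynomial time whether the generic completion $\pi_{\mathrm{gen}}$ is absolutely indecomposable, i.e. whether $R^{\mathrm{ai}}(\alpha)$ contains a dense subset of $L$. If it does, the answer is YES at the level of $\overline{\FF}_q$ and $L\cap R^{\mathrm{ai}}(\alpha)$ even contains a non-empty Zariski-open subset of $L$; if it does not, then $L$ is contained in the proper closed subvariety $\{\dim\End\pi\geq2\}$, and I would argue by induction on $N$, using that a generic point of $L$ then has a proper sub- or quotient representation whose dimension vectors range over a finite, computable set (via the known recursion for $\mathrm{ext}$ between general representations) of smaller Schur roots, to which the algorithm is applied recursively. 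Two points are not formal: (i) over a \emph{fixed} finite field a non-empty Zariski-open subvariety of an affine space need not carry an $\FF_q$-rational point — already $\mathbb{A}^1_{\FF_q}\setminus\FF_q$ fails — so ``YES over $\overline{\FF}_q$'' must be upgraded to, and witnessed by, an $\FF_q$-rational completion; and (ii) when $\alpha$ is imaginary one can have $R^{\mathrm{ai}}(\alpha)\supsetneq R^{\mathrm{s}}(\alpha)$ — take an absolutely non-split self-extension of a Schur representation — so a ``NO'' from the generic test does not end the matter and the inductive step above must genuinely be carried out.

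I expect (i) to be the main obstacle, and I would attack it through the structure of absolutely indecomposable representations of Schur roots. When $\alpha$ is a real root, $R^{\mathrm{ai}}(\alpha)$ is a single $\mathrm{GL}(\alpha)$-orbit of the unique rigid indecomposable $\pi_0$; since $\mathrm{GL}(\alpha)$ is connected, Lang's theorem makes $R^{\mathrm{ai}}(\alpha)(\FF_q)$ a single $\mathrm{GL}(\alpha)(\FF_q)$-orbit, and after bringing $\pi_0$ to a normal form by reflection functors one can hope to test and exhibit $L\cap(\mathrm{GL}(\alpha)\cdot\pi_0)\neq\emptyset$ over $\FF_q$ by linear algebra. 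When $\alpha$ is an imaginary Schur root I would instead build an $\FF_q$-rational absolutely indecomposable completion constructively, from a presentation of a Schur representation by iterated general extensions along an exceptional sequence, arranging the construction to respect the linear constraints defining $L$. The crux — and, as far as I can see, the reason the statement remains conjectural — is to carry out this construction in time polynomial in $N$, controlling in particular the auxiliary field extensions and the branching; a successful execution would, via the self-reduction above, also yield a polynomial-time algorithm outputting an absolutely indecomposable representation of any Schur vector, in sharp contrast with the general situation of Conjecture~\ref{con1}.
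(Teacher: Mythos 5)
The statement you were given is Conjecture \ref{con2}: the paper offers no proof of it, only circumstantial evidence (Examples \ref{ex1}--\ref{ex3} and Remarks \ref{rem1}--\ref{rem3}, which settle or reduce special cases via the explicit classifications of \cite{G72}, \cite{N73}, \cite{DF73} and the reflection functors of \cite{BGP73}). Your text is, by your own account, a strategy rather than a proof, and you correctly isolate the two substantive obstacles: (i) a non-empty Zariski-open subset of the completion space $L$ need not contain a point over the \emph{fixed} field $\FF_q$, and (ii) for imaginary $\alpha$ the generic point of $L$ may fail to be absolutely indecomposable even though the generic point of the full representation space is. Since you close neither gap, the proposal does not establish the conjecture; it is, however, a sensible plan, consistent with what the paper does record (your real-root analysis via uniqueness of the rigid indecomposable and Lang's theorem is in effect a more intrinsic version of the reflection-functor reduction of Remark \ref{rem3}).

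One step you present as routine is not. You assert that running the algorithm of Theorem \ref{th2} over the rational function field $\FF_q(x_1,\ldots,x_M)$ decides genericity in polynomial time. Gauss elimination and characteristic-polynomial computation over a multivariate function field are not polynomial-time in the bit model: the intermediate and final expressions are polynomials in up to $M_\alpha$ variables of degree up to $N$, and such polynomials can have $\binom{M+N}{M}$ monomials, which is not polynomial in $N$. One can evade the swell by specializing the variables at random points of a moderate extension $\FF_{q^k}$ and invoking Schwartz--Zippel, but that only places the genericity test in randomized polynomial time, not in P, absent derandomization. So even the ``easy'' first stage of your reduction requires an argument. Finally, keep in mind that the YES condition of INDEC$[x_1,\ldots,x_M]$ is existential over $\FF_q$, not generic: in the end your algorithm must certify or construct an $\FF_q$-rational absolutely indecomposable completion, which is exactly your point (i) and is where the conjecture genuinely lives.
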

\begin{conjecture}\label{con3}
INDEC$[x_1, \ldots, x_{M_\al}]$ is in P for any quiver $(\Gamma , \Omega   )  $ if $ \alpha \in \C $ (defined by \eqref{eq4}).
\end{conjecture}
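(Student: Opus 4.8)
The plan is to turn the abstract criterion already at hand into an explicit finite computation over $\FF_q$ and then check that each ingredient runs in time polynomial in $N$. By Lemma~\ref{lem2}, $\pi$ is absolutely indecomposable precisely when the algebra $\End\pi\subseteq\End_{\FF_q}U$, where $U=\bigoplus_{v\in\V}U_v$, is quasi-nilpotent; and by Corollary~\ref{cor1} this holds precisely when (i) $\End\pi$ admits a basis of quasi-nilpotent endomorphisms, and (ii) the Lie algebra $(\End\pi)_-$ is nilpotent. So it suffices to produce $\End\pi$ explicitly and then test (i) and (ii).

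First I would fix bases of the spaces $U_v$, identifying $U_v$ with $\FF_q^{n_v}$, $U$ with $\FF_q^N$, and $\End_{\FF_q}U$ with the algebra of $N\times N$ matrices; the input $\pi$ becomes a tuple of matrix blocks $\pi_{v,w}$. Then $\End\pi$ is the space of block-diagonal matrices $a=\sum_v a_v$ cut out by the linear homogeneous system \eqref{5}, with at most $\sum_v n_v^2\le N^2$ scalar unknowns, so Gaussian elimination produces a basis $a_1,\dots,a_m$ of $\End\pi$, $m\le N^2$, in polynomial time. For (i) it is enough to test this basis: $a_i$ is quasi-nilpotent iff its characteristic polynomial $\det_U(\la I_N+a_i)$ --- computable in polynomial time --- equals $(\la+\gamma_i)^N$ for some scalar $\gamma_i$. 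A priori $\gamma_i\in\bar\FF_q$; but separability of $\bar\FF_q$ over $\FF_q$ forces $\gamma_i\in\FF_q$, and since $q$ is fixed one simply tries each of the $q$ candidate values of $\gamma_i$, a bounded number of polynomial-time checks. For (ii) I would compute the lower central series of $\fg:=(\End\pi)_-$ directly: from the basis $\{a_i\}$ form $\fg^2=\span\{[a_i,a_j]\}$ and extract a basis $\{b_j\}$ by Gaussian elimination, then $\fg^3=\span\{[a_i,b_j]\}$, and so on; after at most $m$ such steps, $\fg$ is nilpotent iff $\fg^m=0$. Each step is polynomial and there are polynomially many, so (ii) is decidable in polynomial time; together with (i) this yields the algorithm.

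The one genuinely non-routine point --- and the crux --- is the treatment of the eigenvalues $\gamma_i$, which a priori lie in the algebraic closure: one must invoke separability (equivalently, that a suitable power of the Frobenius is a bijection of $\bar\FF_q$ carrying $\FF_q$ onto itself) to conclude $\gamma_i\in\FF_q$, thereby replacing an in-principle-infinite search by one over the fixed finite set $\FF_q$. Everything else is standard polynomial-time linear algebra over $\FF_q$ --- solving homogeneous linear systems, computing characteristic polynomials, and iterating spans of brackets --- with the length of the lower central series bounded immediately by $\dim\End\pi\le N^2$.
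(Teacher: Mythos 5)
The statement you were asked to prove is Conjecture~\ref{con3}, which the paper leaves \emph{open}; there is no proof of it in the paper to compare against. More importantly, what you have written is not an argument for that statement at all: it is a reproduction of the paper's proof of Theorem~\ref{th2}. Theorem~\ref{th2} concerns the problem INDEC --- given a \emph{concrete} representation $\pi$ over $\FF_q$, decide whether it is absolutely indecomposable. Conjecture~\ref{con3} concerns the problem INDEC$[x_1,\ldots,x_{M_\al}]$, in which all $M_\al=\sum_{v\rightarrow w} n_v n_w$ matrix entries of the representation are indeterminates and one must decide whether \emph{some} assignment of values from $\FF_q$ to them yields an absolutely indecomposable representation. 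That is an existential search over $q^{M_\al}$ assignments; the fact that each individual assignment can be tested in polynomial time (your argument, and the paper's Theorem~\ref{th2}) only places the problem in NP, as the paper itself points out. Nothing in your proposal addresses how to avoid the exponentially large search, which is the entire content of the conjecture.

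The reason the paper singles out $\alpha\in\mathcal{C}$ is structural: by Remark~\ref{rem2}, every $\alpha\in\mathcal{C}$ with $(\alpha|\alpha)<0$ is a Schur vector, and for a Schur vector a \emph{generic} representation of dimension $\alpha$ is absolutely indecomposable, so one hopes that a witness assignment can be produced, or its existence certified, without exhaustive search; the remaining case $(\alpha|\alpha)=0$ reduces to the tame quivers of Example~\ref{ex2}, where the absolutely indecomposable representations are classified explicitly. Turning ``generic over $\bar\FF_q$'' into a deterministic polynomial-time procedure over the \emph{fixed finite} field $\FF_q$ is exactly what is conjectured and not proved (indeed, the paper only records that Conjecture~\ref{con2} would imply Conjecture~\ref{con3}). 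Your write-up, while a correct account of the algorithm behind Theorem~\ref{th2}, proves a different and already-established statement.
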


\begin{example}\label{ex1}
Let $ \Gamma  $ be a Dynkin diagram of type $ A_r, D_r, E_6, E_7, E_8. $  In this case for any orientation $ \Omega $ of $\Gamma$ all indecomposable representations have been constructed explicitly in \cite{G72}, which shows that in this case INDEC$[x_1,\ldots, x_{M_\al}]$ is in P.
\end{example}
\begin{example}\label{ex2}
Let $ \Gamma $ be the extended (connected) Dynkin diagram, so that $ \# \mathcal{V} = r + 1 $
and $ \det A = 0. $ These are the only connected graphs, for which the Cartan matrix is
positive semidefinite and singular. In this case all absolutely indecomposable
representations for any
orientation $ \Omega $ have been constructed in \cite{N73} and in \cite{DF73}.
which shows that in this case  INDEC$[x_1, \ldots, x_{M_\al}]$ is in P as well. Note that in this case \cite{K80}
$ \Delta^{\text{im}}_+ = \ZZ_{\geq 1} \delta, $ where $ A \delta = 0 $ and $ (\delta | \delta    ) = 0,  $ and one can show that  $ P_{\Gamma, n \delta} (q) = q + r$ for $n\in \ZZ_{\geq 1}$.
\end{example}
\begin{example}\label{ex3}
  Let $ \Gamma_m $ be the quiver with two vertices $ v_1 $ and $ v_2 $, and $ m $ arrows from $ v_1 $ to $ v_2. $ For $ m = 1 $ and 2 this is a quiver from Examples \ref{ex1} and \ref{ex2} respectively. For $ m \geq 3 $ the explicit expressions for the polynomials
  $ P_{\Gamma_m , \alpha}(q) $ for an arbitrary
  $\alpha \in \Delta^{\text{im}}_+$ are unknown. Note that in this case
  $ \Delta^{\text{re (resp. im)}}_+  = \{ \alpha = n_1 \alpha_1 + n_2 \alpha_2 |\, n_i \in \ZZ_{\geq 0} \text{ and }
  n^2_1 + n^2_2 - m n_1 n_2   = 1 \ ( \text{resp.}< 0 )\}.   $
\end{example}

Now, let $ (\Gamma , \Omega    ) $ be a quiver, and let $ v $ be a vertex, which is a source or a sink.  In \cite{BGP73} an explicitly computable reflection functor $ R_v $ was constructed, which sends a representation $ \pi $ of dimension $ \alpha \neq v $  of $ (\Gamma, \Omega) $ to a representation $ R_v(\pi) $ of the reflected quiver $ (\Gamma, R_v (\Omega)) $ of dimension $ r_v (\alpha),  $ preserving indecomposability, see also \cite{K80}. It follows that if the problem INDEC$[x_1, \ldots, x_{M_\al}]$ is in P for the quiver $ (\Gamma, \Omega) $ and dimension $ \alpha \neq v, $ and $ v  $ is a source or a sink of $ (\Gamma, \Omega) $, then it is in P for the quiver $ (\Gamma, R_v (\Omega)) $ and dimension $ r_v (\alpha) $.

\begin{remark}\label{rem1}
  If $ v $ is a source or a sink of the quiver $ (\Gamma , \Omega   ) $ and $ \alpha \in \Delta_+ \backslash \{v\} $ is a Schur vector, then $ r_v (\alpha) $ is a Schur vector for $ (\Gamma , R_v (\Omega)   ).$ Also, if $\alpha$ is such that INDEC$[x_1, \ldots, x_{M_\al}]$ is in P,
  then the same holds for $r_v(\al)$.
\end{remark}
\begin{remark}\label{rem2}
  For an arbitrary quiver $ (\Gamma , \Omega)  $ the set $ \mathcal{C} $ consists of Schur
  vectors, except for the vectors with $ ( \alpha | \alpha ) = 0 $ \cite{K80}, in which case, $ \operatorname{supp} \alpha $ is a graph from Example \ref{ex2}.  Hence Conjecture \ref{con2} implies Conjecture \ref{con3}.	
\end{remark}
\begin{remark}\label{rem3}
  Let $ \Gamma_m $ be a quiver from Example \ref{ex3}.  Then, using the reflection functors, we see that for all
  $ \alpha \in \Delta^{\text{re}}_+ $,
  INDEC$[x_1, \ldots, x_{M_\al}]$ is in P.  Since for this quiver
  $(\al|\al)<0$ for  all $\alpha\in \mathcal{C} $, we see that all $ \alpha \in \Delta^{\text{im}}_+ $ are Schur vectors \cite{K80}, and it follows from Remark \ref{rem1} and Conjecture \ref{con2} that for all $ \alpha \in \Delta^{\text{im}}_+ $, INDEC$[x_1, \ldots, x_{M_\al}]$ is in P as well. 
\end{remark}

However, in general, $ \alpha \in \Delta_+ $ is not a Schur vector, so that a generic representation of a quiver $ (\Gamma , \Omega) $ of dimension $ \alpha \in \Delta_+ $ is not absolutely indecomposable.
In this case INDEC$[x_1, \ldots, x_{M_\al}]$ becomes a problem of finding a needle in a haystack, which leads me to
(naively) believe in Conjecture \ref{con1}. 

In fact, I believe that for any connected quiver, different from those in Examples \ref{ex1}, \ref{ex2}, and \ref{ex3}, there exists $ \alpha \in \Delta_+ $, for which INDEC$[x_1, \ldots, x_{M_\al}]$ is not in P.

\begin{remark}\label{rem4}
  As explained in \cite{K83}, claim (a) of Theorem \ref{th1} extends to the case of $\Gamma$ with self-loops. Claim (c) is proved in \cite{HLRV13} in this generality.
Theorem \ref{th2} holds in this generality as well.  
\end{remark}
  
\section*{Acknowledgements}
I am grateful to L. Babai, L. Levin, S. Micali, B. Poonen, M. Sipser, M. Sudan, and R. Williams for very valuable discussions and correspondence. 

\end{document}